\theoremstyle{definition} 
\theoremstyle{plain}      \newtheorem{lemma}{Lemma}
\theoremstyle{plain}      
\theoremstyle{plain}      \newtheorem{corollary}{Corollary}
\theoremstyle{plain}      
\theoremstyle{plain}      \newtheorem{proposition}{Proposition}
\theoremstyle{plain}      \newtheorem{theorem}{Theorem}
\theoremstyle{remark}     
\theoremstyle{remark}     
\theoremstyle{plain}
\title[Exact scaling]{Exact scaling in the expansion-modification system}
\author{R. Salgado-Garc\'ia}
\address{Facultad de Ciencias, Universidad Aut\'onoma del Estado de Morelos, 
Avenida Universidad 1001, Colonia Chamilpa, Cuernavaca C.P. 62209, Morelos, Mexico.}
\author{E. Ugalde}
\address{Instituto de F\'isica, Universidad Aut\'onoma de 
San Luis Potos\'i, Avenida Manuel Nava 6, Zona Universitaria, 
78290 San Luis Potos\'\i , M\'exico.}
\date{\today}                                         
\begin{document}

\begin{abstract}
This work is devoted to the study of the scaling, and the consequent power-law behavior, 
of the correlation function in a mutation-replication model known as 
the expansion-modification system. The latter is a biology inspired random substitution 
model for the genome evolution, which is defined on a binary alphabet and depends on 
a parameter interpreted as a \emph{mutation probability}. We prove that the time-evolution 
of this system is such that any initial measure converges towards a unique stationary one 
exhibiting decay of correlations not slower than a power-law.  We  then prove, for a 
significant range of mutation probabilities, that the decay of correlations indeed follows 
a power-law with scaling exponent smoothly depending on the mutation probability. Finally 
we put forward an argument which allows us to give a closed expression for the 
corresponding scaling exponent for all the values of the mutation probability. Such 
a scaling exponent turns out to be a piecewise smooth function of the parameter.
\end{abstract}

\maketitle

\bigskip

\section{Introduction.}

\subsection{} In recent years, several models have been introduced  (such as $n$-step 
Markov chains or  hidden Markov chains, among 
others~\cite{li1997study,ma2008infinite,saakian2008,sobottka2011}) 
to describe the evolution of nucleotide sequences as well as the patterns and 
correlations occurring in the genome.  In this paper we are concerned with one of those 
models, proposed by W. Li~\cite{li1989spatial}, which consists of a sequence (or chain) 
of symbols that evolve according to a given discrete-time stochastic dynamics. Such a 
dynamics captures the essential processes which are assumed to be responsible of the 
genome evolution: the random expansion and  modification of symbols (hence the name of 
expansion-modification system). Originally introduced as a simple model exhibiting some 
spatial scaling properties, a behavior ubiquitous in natural phenomena~\cite{li1989spatial}, 
it was subsequently used to understand the scaling  properties and the long-range 
correlations found in real DNA 
sequences~\cite{buldyrev1995long,li1991expansion,li1992long,li1997study,peng1992}. 
Recently, the expansion-modification system has also been used to investigate the 
universality of the rank-ordering distributions~\cite{alvarez2010order, 
mekler2009universality}. 

\medskip\subsection{}
From the mathematical point of view, the expansion-modification system belongs to the 
class of random substitution dynamical systems, which attracted some attention in recent 
years for their possible applications in genome evolution studies (see~\cite{2012Koslicki} 
and references therein). A related class of stochastic processes, inspired by randomly 
generated grammars, were formalized and studied by Toom and coworkers (see~\cite{2011Toom}). 
Previously,  Godr\`eche and Luck used random substitutions to study the robustness of 
quasiperiodic structures~\cite{1989GodrecheLuck}, in particular structures associated 
to random perturbations of Fibonacci sequences and Penrose tilings. They observed that 
the Fourier spectrum of the structures thus obtained are of mixed type: they contain 
both singular and continuous parts. Fourier spectra of mixed type appear in structures 
corresponding to random perturbation of quasicrystals. For instance, Zaks~\cite{zaks2001} 
observed mixed spectrum in structures generated by randomized Thue-Morse sequences. 
In our case, the structure generated by the expansion-modification system cannot be seen
as a random perturbation of a quasicrystal, as the corresponding Fourier spectrum turns 
out to be continuous. What the expansion-modification system shares with those randomly 
perturbed quasicrystals is the scaling property and the consequent power law behavior of 
the correlations and the Fourier spectrum. For those randomly perturbed quasicrystals, 
the scaling of the perturbed structure can be very easily deduced from the scaling
already present in the underlying quasicrystal, by means of obvious recurrence relations 
derived from the inflation rules. As we will show, the scaling in the 
expansion-modification system derives from recurrence relations implied by the
underlying dynamics, and contrary to the quasicrystal case, these recurrence relations 
grow in complexity in such a way that its treatment demands the implementation of 
nontrivial techniques. The rigorous study of this scaling behavior and the consequent 
power laws is the main contribution of this work.

\medskip \subsection{} 
The expansion-modification system can be described as follows. Consider the random 
substitution
\begin{eqnarray*}
 0 &\mapsto& \left\{\begin{array}{ll} 1 & \text{ with probability } p,\\  
00 & \text{ with probability } 1-p,                                   
\end{array}\right.\\
 1 &\mapsto& \left\{\begin{array}{ll} 0 & \text{ with probability } p,\\ 
11 & \text{ with probability } 1-p,
\end{array}\right.
\end{eqnarray*}
in the binary set $\{0,1\}$, and extend it coordinate-wise to the set $\{0,1\}^+$ 
of finite binary strings. Starting at time zero with a seed in ${\bf x}^0\in\{0,1\}^+$, 
and iterating the above substitution, we obtain a sequence 
\[{\mathbf x}^0\mapsto {\mathbf x}^1\mapsto \cdots \mapsto {\mathbf x}^{n}\mapsto\cdots \]
of finite strings of non-decreasing length. Since the applied substitution is a random map, 
the sequence we obtain by successive iterations is a random sequence which is nevertheless 
supposed to converge, in a certain statistical sense, to a random string 
${\mathbf x}^\infty$. It is easy to see that the probability of having a finite string 
after infinitely many iterations is zero, therefore it is more convenient to study
the evolution of infinite strings under the infinite extension of the above substitution. 
This is precisely the point of view we will follow throughout this work.

\medskip\subsection{}
The paper is organized as follows. In Section~\ref{sec:themodel} we set up the mathematical
framework where the expansion-modification system is defined.
In Section~\ref{sec:results} we state our main results, which we proof in 
Section~\ref{sec:proofs}.
In Section~\ref{sec:heuristic} we compute a closed expression for the scaling exponent 
which presumably holds in the whole range of mutation probabilities. We finish the paper 
with some final remarks and comments.

\bigskip

\section{The Expansion-Modification Dynamics.}
\label{sec:themodel}
\medskip

\subsection{}
In order to review the expansion-modification system, let us start fixing the relevant 
notation and terminology. Let $X = \{0,1\}^{\mathbb{N}_0}$, which we endow with the 
$\sigma$-algebra generated by the cylinder sets. Elements of $X$ are called 
{\em configurations}, and will be denoted by boldface characters 
like $\mathbf{x}=x_0 x_1\cdots$, with $x_i \in \{0,1\}$. Finite sequences of symbols, 
also called \emph{words}, will be also denoted by boldfaced letters while their size 
will be denoted by $|\cdot|$, \emph{i.e.}, for $\mathbf{v}\in  \{0,1\}^k$ we have 
$|\mathbf{v}| = k$. 
A word $\mathbf{v}\in  \{0,1\}^k$ occurs as \emph{prefix} of $\mathbf{x} \in X$,
which we denote by $\mathbf{v}\sqsubseteq\mathbf{x}$, if 
$\mathbf{v}=x_0x_1\cdots x_{k-1}$. We will also use this notation when $\mathbf{x}\in X$ 
is replaced by a finite word. 
Given a configuration $\mathbf{x}=x_0 x_1\cdots\in X$, and integers $0\leq i < j$,  
we denote with $\mathbf{x}_i^j$ the word $x_ix_{i+1}\cdots x_j$.
Product of words will be understood as concatenation: given two words 
$\mathbf{v}\in  \{0,1\}^k$ and $\mathbf{w}\in \{0,1\}^l$, we let $\mathbf{v}\mathbf{w}$ 
denote the word $\mathbf{u}$ of size $k+l$ satisfying $\mathbf{u}_0^{k-1} = \mathbf{v}$ 
and $\mathbf{u}_{k}^{k+l-1} = \mathbf{w}$. 
Consider $S = \{\mbox{e},\mbox{m}\}^{\mathbb{N}_0}$, where the symbols $\mbox{e}$ 
and $\mbox{m}$ stand for expansion and modification respectively. The space $S$, which we 
will refer to as the \emph{space of substitutions}, is endowed with the $\sigma$-algebra
generated by the cylinder sets as well.
We will use the same convention to denote the elements of $S$, words and concatenation of 
words, as for the symbolic space $X$. 

\medskip

\subsection{} Let us now define the \emph{local substitutions} 
$\mathrm{e}, \mathrm{m} : \{ 0,1\} \to \{ 0,1\}^+:=\cup_{n=1}^\infty\{0,1\}^n$, which are
given by
\begin{eqnarray*}
\mathrm{e}(x)  &=& xx,  \\
\mathrm{m} (x) &=& 1-x.
\nonumber
\end{eqnarray*}
A configuration $\mathbf{s}\in S$ of local substitutions defines the 
\emph{global substitution} $\mathbf{s} : X \to X$ given by
\[
\mathbf{s}(\mathbf{x}) = \prod_{i\in \mathbb{N}_0} s_i(x_{i}).
\]
Here $\prod$ stands for concatenation of words. Notice that $\mathbf{s}$ replaces the 
$i$-th symbol of $\mathbf{x}$ according to the $i$-th local substitution, i.e, if  
$s_i = \mbox{e}$ then $x_i$ is expanded, otherwise $x_i$ is modified. 

\medskip

\subsection{}
The \textit{expansion-modification dynamics} is a random dynamical system 
whose orbits depend on an initial condition and a choice of global substitutions to 
be applied to that initial condition. To be more precise, an initial condition 
$\mathbf{x}\in X$ and a sequence
$\mathbf{s}^0,\mathbf{s}^1,\mathbf{s}^2,\ldots$ of configurations in $S$, define 
the orbit 
$\mathbf{x}^0,\mathbf{x}^1,\mathbf{x}^2,\ldots$ in $X$ where $\mathbf{x}^0:=\mathbf{x}$ 
and $\mathbf{x}^{t+1}=\mathbf{s}^t(\mathbf{x}^{t})$ for each $t>0$. 
At each time step $t$, the global substitution $\mathbf{s}^t$ is randomly chosen 
according to the Bernoulli measure $\mu_p$ such that $\nu_p[\mbox{m}]=p$ and 
$\nu_p[\mbox{e}]=1-p$. The parameter $p\in(0,1)$ is the \emph{mutation probability}.

\medskip\noindent 
In terms of distributions, the expansion-modification system can be defined as follows. 
If $\mu^t$ is the measure according to which the time-$t$ configurations are distributed, 
then the distribution $\mu^{t+1}$ of time-$(t+1)$ configurations is completely determined 
by $\nu_p$ and $\mu^t$ according to the following expression:
\begin{equation}\label{eq:marginals}
\mu^{t+1}\{(\mathbf{x}^{t+1})_0^{\ell}=\mathbf{a}\}=
\sum_{\mathbf{c}\in \{\mathrm{e},\mathrm{m}\}^{\ell+1}}
\sum_{\mathbf{b}\in \{0,1\}^{\ell+1}\atop \mathbf{a} \sqsubseteq
\prod_{i=0}^\ell  s_i(b_i)} \mu^{t}\{(\mathbf{x}^t)_0^{\ell}=\mathbf{b}\}\,
\nu_p\{(\mathbf{s}^t)_0^{\ell}=\mathbf{c}\},
\end{equation}
for each $\ell\in \mathbb{N}_0$ and $\mathbf{a}\in\{0,1\}^{\ell +1}$. 
As mentioned before, $\mathbf{a}\sqsubseteq \mathbf{b}$ means that the word 
$\mathbf{a}$ occurs as a suffix of the word $\mathbf{b}$. Hence, the evolution of 
the $(\ell+1)$-marginal is nothing but a Markov chain. Indeed, considering
the $(\ell+1)$-marginal of a measure $\mu$ as a probability vector of dimension 
$2^{\ell+1}$, the $(\ell+1)$-marginal $\mu_\ell^t$, of the time-$t$ distribution 
is given by matrix product $\mu^t_\ell=\mu^0_\ell\,M_\ell^t$, where
$M_\ell:\{0,1\}^{\ell+1}\times\{0,1\}^{\ell+1}\to[0,1]$ is the 
$2^{\ell+1}\times 2^{\ell+1}$-stochastic matrix given by
\[
M_\ell(\mathbf{a},\mathbf{b})=\sum_{\mathbf{c}\in \{\mathrm{e},\mathrm{m}\}^{\ell+1}  
\atop \mathbf{a}\sqsubseteq\prod_{i=0}^\ell c_i(b_i)  } \nu_p[\mathbf{c}].
\]

\bigskip

\section{Results}
\label{sec:results}
\subsection{} 
Our first result states the existence and uniqueness of the stationary distribution, 
which turns out be a global attractor for the expansion-modification dynamics.

\medskip\noindent
\begin{theorem}[Existence and Uniqueness]\label{theo:asymptotic}
For each $p\in (0,1)$ there exists a unique measure $\mu_p$ on $X$ which is invariant 
under the expansion-modification dynamics. Furthermore, starting from any measure 
$\mu^0$ determining the distribution of the initial conditions, the measure $\mu^t$, 
corresponding to the distribution at time $t$, converges in the *-weak sense to $\mu_p$.
\end{theorem}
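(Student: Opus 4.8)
The plan is to exploit the reduction, already established in Section~\ref{sec:themodel}, of the time evolution of each length--$\ell$ marginal to a finite Markov chain with transition matrix $M_\ell$, so that $\mu_\ell^t=\mu_\ell^0 M_\ell^t$. Since both assertions of the theorem concern only events depending on finitely many coordinates, it suffices to control the asymptotics of $\mu_\ell^t$ for every $\ell$ and then to glue the limiting marginals into a single measure on $X$. Thus the proof decomposes into three tasks: establishing primitivity of each $M_\ell$, verifying the compatibility condition~\eqref{eq:compatibility} for the limiting vectors, and assembling these into a weak${}^*$ limit.

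First I would establish that each $M_\ell$ is primitive, splitting this into irreducibility and aperiodicity. Irreducibility rests on the combinatorial fact---relegated to Appendix~\ref{sec:appendixA}---that for any two words $\mathbf{a},\mathbf{b}\in\{0,1\}^{\ell+1}$ there is a finite sequence of substitutions carrying $\mathbf{a}$ to a string having $\mathbf{b}$ as a prefix; since every word in $\{\mathrm{e},\mathrm{m}\}^{\ell+1}$ has strictly positive $\nu_p$--probability for $p\in(0,1)$, this yields $M_\ell^n(\mathbf{a},\mathbf{b})>0$ for some $n$. Aperiodicity is immediate from the self--loop $M_\ell(0\cdots0,0\cdots0)>0$, valid because $0\cdots0$ occurs as a prefix of $\mathrm{e}(0)\cdots\mathrm{e}(0)$. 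Primitivity then lets me invoke the Perron--Frobenius theorem to obtain, for each $\ell$, a unique strictly positive stationary probability vector $\mu_\ell$ with $\mu_\ell=\mu_\ell M_\ell$ and $\lim_{t\to\infty}\mu_\ell^0 M_\ell^t=\mu_\ell$ for every initial $\mu_\ell^0$, which is precisely Equation~\eqref{eq:convergence}.

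Next I would verify the compatibility condition~\eqref{eq:compatibility}, so that the family $\{\mu_\ell\}_\ell$ actually defines a measure. The strategy is to show compatibility at every finite time and then pass to the limit. The base case $t=0$ follows from additivity of $\mu^0$ over the disjoint cylinders $[\mathbf{a}x]$, and the inductive step is a direct manipulation of Equation~\eqref{eq:marginals}: summing over the appended symbol $x$ and over the corresponding extra substitution coordinate $\rho$ collapses the length--$(\ell+1)$ recursion onto the length--$\ell$ one, giving $\sum_{x}\mu_{\ell+1}^{t+1}(\mathbf{a}x)=(\mu_\ell^t M_\ell)(\mathbf{a})=\mu_\ell^{t+1}(\mathbf{a})$. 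Letting $t\to\infty$ and using the convergence just established transfers the identity to the stationary vectors. With compatibility in hand, Kolmogorov's consistency theorem produces a unique measure $\mu_p$ on $X$ whose length--$\ell$ marginals are the $\mu_\ell$; stationarity of each marginal makes $\mu_p$ invariant, and the marginal convergence~\eqref{eq:convergence} upgrades, by definition of the weak${}^*$ topology on cylinder events, to weak${}^*$ convergence of $\mu^t$ to $\mu_p$.

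The main obstacle is the irreducibility input, i.e.\ the reachability claim isolated in Appendix~\ref{sec:appendixA}. Everything downstream---Perron--Frobenius, the limiting marginals, their compatibility, and their invariance---follows mechanically once primitivity is secured, so the genuine content is the constructive argument that an arbitrary target word $\mathbf{b}$ can be produced as a prefix from an arbitrary source word $\mathbf{a}$ under an admissible sequence of expansion--modification steps. I expect this to require careful bookkeeping of how prefixes lengthen under expansion and flip under modification, which is exactly why it is treated separately rather than carried out inline.
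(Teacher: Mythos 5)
Your proposal is correct and takes essentially the same approach as the paper: primitivity of each $M_\ell$ via the reachability claim of Appendix~\ref{sec:appendixA} together with the self--loop at $0\cdots 0$, Perron--Frobenius for the limiting marginals, the finite--time compatibility computation from Equation~\eqref{eq:marginals} passed to the limit $t\to\infty$, and Kolmogorov's consistency theorem to assemble $\mu_p$ and upgrade marginal convergence to weak${}^*$ convergence. The one step you defer --- the constructive reachability argument --- is precisely the step the paper itself isolates in Appendix~\ref{sec:appendixA}, so nothing essential is missing.
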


\medskip \noindent It is not hard to see that uniqueness does not hold for $p\in\{0,1\}$. 
For $p=0$, when only expansion is possible, each convex combinations of the Dirac measures 
at all-zeros and all-ones, is an admissible invariant distribution.
On the other hand, in the case $p=1$, the dynamic of each initial distributions enters 
a period-two cycle, excepting for the measures that are flip-invariant 
($0\leftrightarrow 1$). In both cases, the asymptotic regime depends on the initial 
distribution.

\medskip \noindent In~\cite{2011Toom} Toom and coworkers prove the existence of invariant
measures for substitution operators similar to, but not including, the 
expansion-modification dynamics. In their case, due to random deletion of words, the 
dynamics cannot be reduced to the action of stochastic matrices over finite length marginals.

\medskip 
\subsection{} The main result of this paper establishes the power-law 
decay of correlations exhibited by the unique stationary measure $\mu_p$. Before stating 
this theorem, let us remind the main definitions.

\medskip \noindent
The \emph{two-sites correlation function}, $C_p:\mathbb{N}_0\to\mathbb{R}$, is given by
\[
C_p(n):=\int_{X} {x}_0 {x}_n\,d\mu_p(\mathbf{x})-
   \left(\int_X {x}_0\,d\mu_p(\mathbf{x})\right) 
   \left(\int_X  {x}_n\,d\mu_p(\mathbf{x})\right),
\]
where ${x}_n$ denotes, as usual, the projection of $\mathbf{x}$ on the $n$-th coordinate. 
Following the usual practice, 
we say that \emph{$\mu_p$ has decay of correlations} if $\lim_{n\to\infty}|C_p(n)|=0$.

\medskip\noindent 
Let $p^*:=\sup\{p\in(0,1):\ C_p(n)>0\ \forall n\in\mathbb{N}\}$, and for each 
$p\in (0,1)$, let 
\begin{equation}\label{eq:scalingexponent}
\beta_p:=\frac{\log(2-p)-\log(1-2p)-\log(2-3p)}{\log(2-p)}.
\end{equation}

\medskip \noindent
\begin{theorem}[Power-law Decay of Correlations]\label{theo:asymptoticscaling}
For each $p\in (0,p^*)$ there exist $n_0\in\mathbb{N}$ and constants $A_p\leq 1\leq B_p$ 
such that 
\[
A_p\, n^{-\beta_p}\leq C_p(n)\leq B_p\, n^{-\beta_p}
\] 
for all $n\geq n_0$.
\end{theorem}

\medskip
\subsection{} 
The invariance of the expansion-modification dynamics under coordinatewise negation, or 
\emph{flip invariance}, $0\leftrightarrow 1$, implies that 
$\mu_p\{{x}_n = 1\}=\mu_p\{{x}_n=0\}=1/2$ for all $n\in\mathbb{N}_0$. Therefore
\[
C_p(n):=\int_{X} {x}_0 {x}_n\,d\mu_p(\mathbf{x})-1/4\equiv
\mu_p\{ {x}_0={x}_n=1\}-1/4.
\]
Now, flip invariance also implies $\mu_p\{ {x}_0 = {x}_n=1\}=\mu_p\{{x}_0={x}_n=0\}$ and 
$\mu_p\{{x}_0=0\neq {x}_n=1\}=\mu_p\{{x}_0=1\neq {x}_n=0\}$, for each $n\in\mathbb{N}_0$. 
With this, we obtain a very simple expression for the two-sites correlation 
\[
C_p(n):=\frac{1}{2}\left(\mu_p\{{x}_0={x}_n\}-1/2\right)=
\frac{1}{4}\left(\mu_p\{{x}_0={x}_n\}-\mu_p\{{x}_0\neq{x}_n\}\right).
\]
This expression and the invariance under the expansion-modification dynamics will allow 
to deduce a recurrence formula for the two-sites correlation function.

\medskip\noindent Let us denote by $\ell(\mathbf{s}_0^k)$ the length of the words obtained 
by applying the substitution $\mathbf{s}_0^k$, and for each $k, n\in \mathbb{N}$ let
\[\nu_p(k,n)   :=   \nu_p\{\ell(\mathbf{s}_0^{k-2})=n-1\} 
            \equiv \binom{k-1}{n-k}(1-p)^{n-k}p^{2k-n-1}.  \]
Since $\mu_p$ is left invariant under the expansion-modification dynamics, and since 
$\nu_p$ is a Bernoulli measure, then
\begin{eqnarray*}
\mu_p\{{x}_0={x}_n\}&=&\sum_{k=\lceil n/2\rceil}^n
\mu_p\{{x}_0={x}_k\}\left(p^2\nu_p(k,n)+(1-p)^2(\nu_p(k,n-1)+\nu_p(k,n-2)\right)\\
                    & &+\sum_{k=\lceil n/2\rceil}^n
\mu_p\{{x}_0\neq{x}_k\}\left(2p(1-p)\nu_p(k,n-1)+p(1-p)\nu_p(k,n)\right),
\end{eqnarray*}
and similarly for $\mu_p\{{x}_0\not={x}_n\}$. From the previous equation and its analogous 
for $\mu_p\{{x}_0\not={x}_n\}$, it readily follows that
\begin{equation}\label{eq:reccorrelation}
C_p(n)=\sum_{k=\lfloor n/2\rfloor}^nC_p(k) \left(f(p)\,\nu_p(k,n)+
                         g(p)\,\nu_p(k,n-1)+h(p)\,\nu_p(k,n-2)\right),
\end{equation}
with $f(p):=p(2p-1)$, $g(p):=(1-p)(1-3p)$, and $h(p):=(1-p)^2$.
This relation can be rewritten as 
\begin{equation}\label{eq:explicitrec}
C_p(n)=\frac{1}{1+p^n(1-2p)}\sum_{k=[n/2]}^{n-1}C_p(k)\mathcal{W}_p(k,n),
\end{equation}
with $\mathcal{W}_p(k,n)$ defined as
\begin{equation}\label{eq:Wpdekyn}
\mathcal{W}_p(k,n) := f(p)\nu_p(k,n) + g(p)\nu_p(k,n-1) + h(p)\nu_p(k,n-2). 
\end{equation}

\medskip \noindent We will make extensive use of the previous relations in the proof 
of Theorem~\ref{theo:asymptoticscaling}.

\bigskip
\section{The scaling exponent}\label{sec:heuristic}

\subsection{}According to Theorem~\ref{theo:asymptoticscaling}, the correlation 
function follows an asymptotic scaling law for mutation probabilities in the 
range $0< p <p^*$. It also establishes an expression for the scaling exponent $\beta_p$. 
The following argument leads us to conjecture that the scaling property, and the 
expression for the corresponding scaling exponent, extend to the whole interval 
$0 < p < 1$. 

\medskip\noindent
Let us consider the recursive relation
\[
C_p(n)=\sum_{k=\lfloor n/2\rfloor}^n
             C_p(k)\left(f(p)\,\nu_p(k,n)+g(p)\,\nu_p(k,n-1)+h(p)\,\nu_p(k,n-2)\right),
\]
deduce above. The distribution $k\mapsto \nu_p(k,n)$, which is unimodal with maximum at 
$k\approx n/(2-p)$, steepens around this maximum as $n$ goes to infinity in such a way 
that
\[\sum_{k=\lfloor n/2\rfloor}^n \nu_p(k,n)\approx 
                          \sum_{\ell(n)\leq k\leq u(n)} \nu_p(k,n),\]
where $\ell(n) < n/(2-p)< u(n)$, which we define below in Subsection~\ref{sub:scaling}, 
are such that both $(2-p)\ell(n)/n$ and $(2-p)u(n)/n$ tend to 1 as $n$ goes to infinity. 
Hence, assuming a slow variation in $k\mapsto C_p(k)$, we have
\begin{eqnarray*}
C_p(n)&\approx&\sum_{\ell(n)\leq k\leq u(n)}
C_p(k)\left(f(p)\,\nu_p(k,n)+g(p)\,\nu_p(k,n-1)+h(p)\,\nu_p(k,n-2)\right)\\
       &\approx& C\left(\frac{n}{2-p}\right)\sum_{\ell(n)\leq k\leq u(n)}
              \left(f(p)\,\nu_p(k,n)+g(p)\,\nu_p(k,n-1)+h(p)\,\nu_p(k,n-2)\right)\\
       &\approx& C\left(\frac{n}{2-p}\right)\sum_{k=\lfloor n/2\rfloor}^n
              \left(f(p)\,\nu_p(k,n)+g(p)\,\nu_p(k,n-1)+h(p)\,\nu_p(k,n-2)\right)\\ 
       &   =   & C\left(\frac{n}{2-p}\right)\left(f(p)\,S_p(n)+
                                             g(p)\,S_p(n-1)+h(p)\,S_p(n-2)\right),                  
\end{eqnarray*}
where $S_n(p):=\sum_{k=\lfloor n/2\rfloor}^n\nu_p(k,n)=(1-(p-1)^n)/(2-p)$. 
From this we finally obtain the approximate scaling relation
\[
C_p((2-p)^k\,n_0)\approx \left(\frac{(1-2p)(2-3p)}{(2-p)}\right)^kC(n_0),
\]
which traduces into the scaling law $C_p(n)\approx C_p(n_0)\, (n/n_0)^{-\beta_p}$, with 
\[\beta_p:=\frac{\log(2-p)-\log(1-2p)-\log(2-3p)}{\log(2-p)}.\]

\medskip \subsection{} We have used the recurrence relation in 
Equation~\eqref{eq:explicitrec} to numerically compute the two-sites correlation 
function for different values of the mutation probability. As shown in 
Figure~\ref{fig:powerlaws}, the numerical computations confirm that the two-point 
correlation function approximatively 
follows a power-law behavior. 
Furthermore, according to Figure~\ref{fig:comparisonbetas}, the theoretically predicted
exponents, $\{-\beta_p:\ 0 < p < 1\}$, fit very well the ones obtained by linear 
regression from the numerically computed correlation functions. 

\begin{center}
\begin{figure}[h]
\includegraphics[scale=0.45]{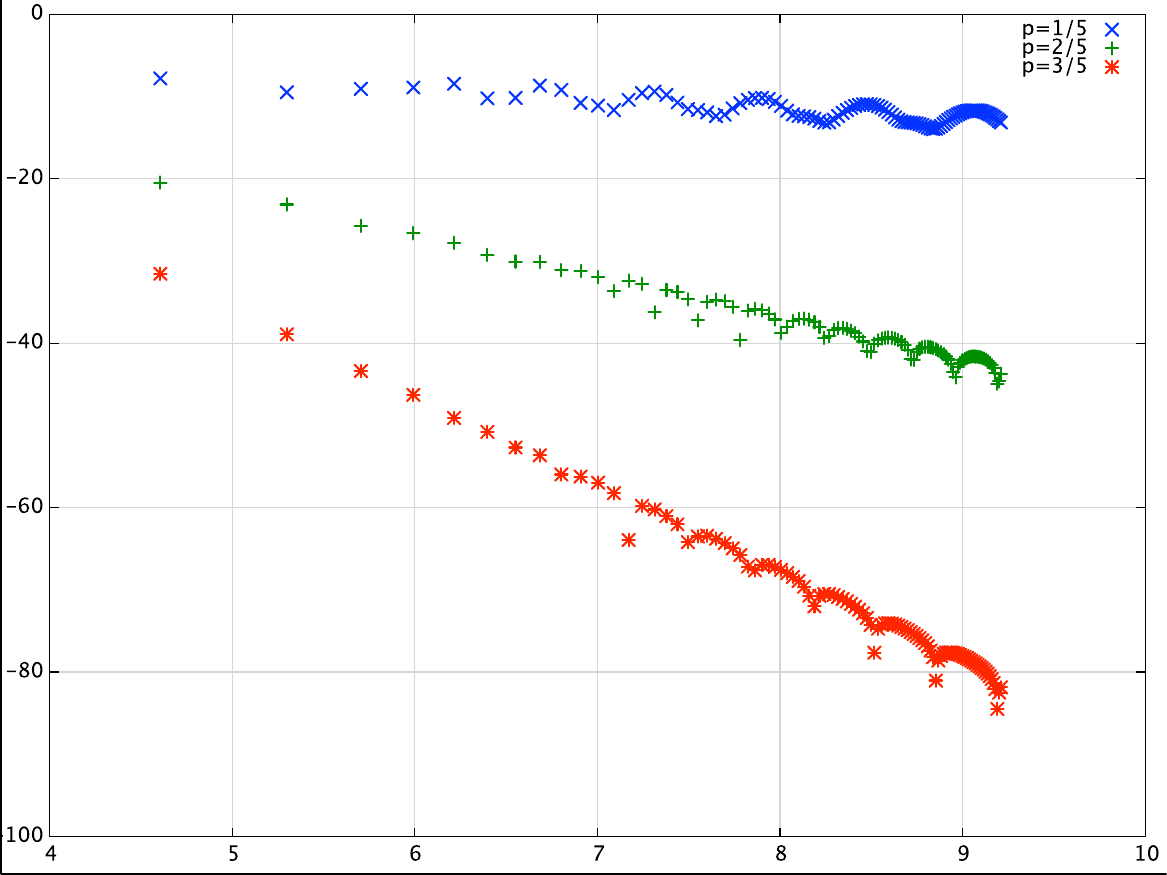}
\caption{Log-log plot of the two-sites correlation function. A power-law 
behavior clearly appears.}\label{fig:powerlaws}
\end{figure}
\end{center}

\begin{center}
\begin{figure}[h]
\includegraphics[scale=0.45]{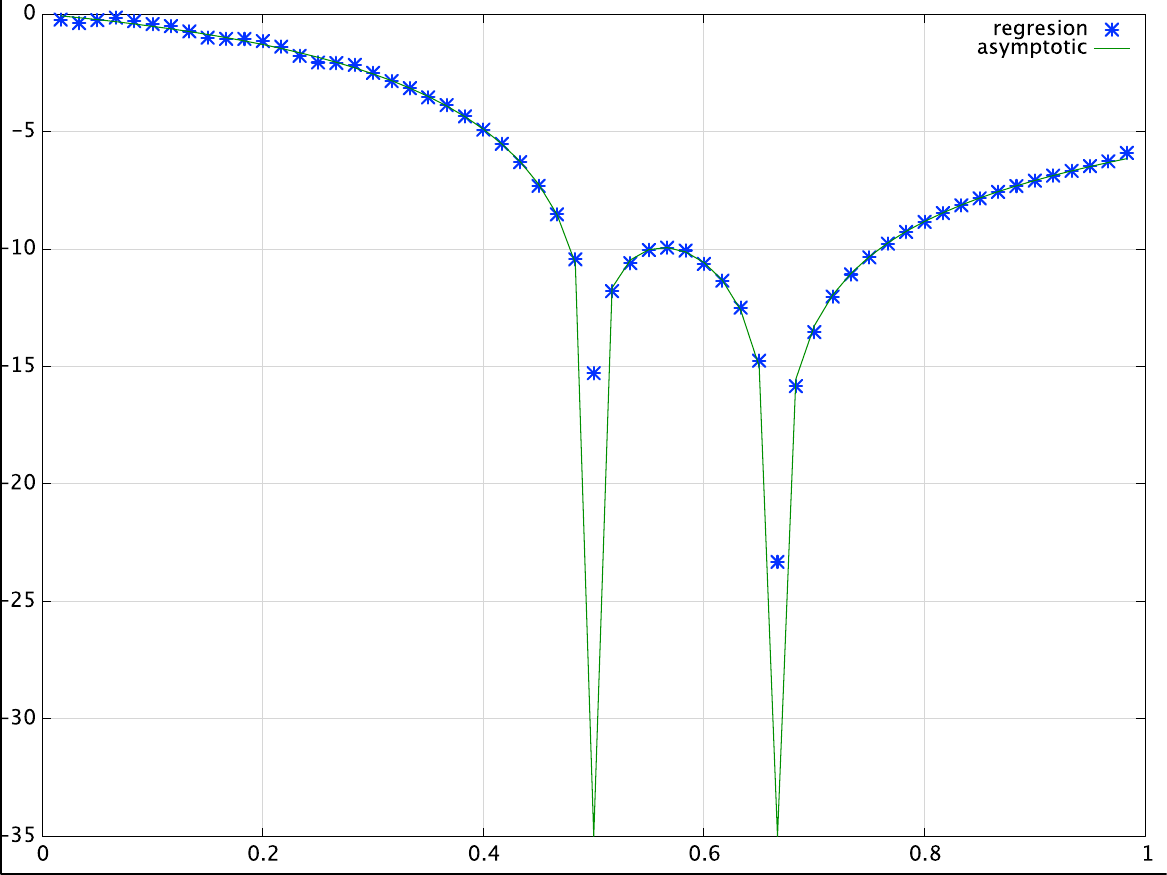}
\caption{Exponents obtained by the best power-law fit to the two-sites correlation 
function compared to the theoretical asymptotic exponents 
$-\beta_p$.}\label{fig:comparisonbetas}
\end{figure}
\end{center}

\medskip\noindent
The argument developed above suggests that the stationary measure $\mu_p$ varies in a
piecewise smooth way with $p$. This variation is reflected on the behavior of the two-sites 
correlation function $C_p$, which appears to follow a power law decay which prevails in the 
whole interval $0 < p < 1$, except for the two singularities located at $p=1/2$ and $p=2/3$.
At precisely those values of $p$, the two-sites correlation function appears to decay faster 
than any power law.


\section{Proofs}\label{sec:proofs}

\subsection{}
Our proof of Theorem~\ref{theo:asymptotic} requires the following.

\medskip
\begin{lemma}\label{lem:irreducible}
For each $\mathbf{a}, \mathbf{b}\in\{0,1\}^{\ell+1}$ there exists 
$\{\mathbf{s}^0,\mathbf{s}^1,\ldots, \mathbf{s}^n\}\subset\{\mathrm{e},\mathrm{m}\}^+$, 
such that $\mathbf{b}\sqsubseteq
\mathbf{s}^n\circ\cdots\circ\mathbf{s}^{1}\circ\mathbf{s}^0(\mathbf{a})$.
\end{lemma}

\medskip
\begin{proof}
For $\ell\in\mathbb{N}_0$ and $\mathbf{a}\in\{0,1\}^{\ell+1}$, let 
$\mathbf{s}=\mathrm{e}^{\ell+1}$ whenever $a_0=0$, otherwise let 
$\mathbf{s}=\mathrm{m}\,\mathrm{e}^{\ell}$.
Clearly, for each  $n >\lceil\log(\ell+1)/\log(2)\rceil$ we have
\[
0^{\ell+1}\sqsubseteq \mathbf{t}^n\circ\cdots\circ\mathbf{t}^1\circ\mathbf{s}(\mathbf{a}),
\]
where $\mathbf{t}^j\in\{\mathrm{e},\mathrm{m}\}^+$ is such that 
$\mathrm{e}^{\ell+1}\sqsubseteq\mathbf{t}^j$ for each $1\leq j\leq n$. 
We claim that for each
$\mathbf{b}\in\{0,1\}^{\ell+1}$ there exists 
$\{\mathbf{s}^0,\mathbf{s}^1,\ldots, \mathbf{s}^k\}\subset\{\mathrm{e},\mathrm{m}\}^+$ such 
that
\[
\mathbf{b}\sqsubseteq
\mathbf{s}^k\circ\cdots\circ\mathbf{s}^{1}\circ\mathbf{s}^0\left(0^{\ell+1}\right),
\]
which readily implies the result.

\noindent
Since $0\sqsubseteq\mathrm{e}(0)$ and $1\sqsubseteq\mathrm{m}(0)$, the claim holds for 
$\ell=0$. Assuming the claim for $\ell=l-1$ we have, for all $\mathbf{c}\in\{0,1\}^{l+1}$, 
a sequence  
$\{\mathbf{s}^0,\mathbf{s}^1,\ldots, \mathbf{s}^k\}\subset\{\mathrm{e},\mathrm{m}\}^+$ 
such that
$\mathbf{c}_1^{l}\sqsubseteq
\mathbf{s}^k\circ\cdots\circ\mathbf{s}^{1}\circ\mathbf{s}^0\left(0^{l}\right)$.
If $k$ even and $c_0=1$ or $k$ odd and $c_0=0$, by taking 
$\mathbf{t}^j:=\mathrm{m}\,\mathbf{s}^j$ for $0\leq j\leq k$, we have 
\[
\mathbf{c}=c_0\,\mathbf{c}_1^{m}\sqsubseteq
           \mathrm{m}^{k+1}(0)\,
           \mathbf{s}^k\circ\cdots\circ\mathbf{s}^{1}\circ\mathbf{s}^0\left(0^{l+1}\right)
           =\mathbf{t}^k\circ\cdots\circ\mathbf{t}^{1}\circ\mathbf{t}^0\left(0^{l+1}\right).
\]
On the other hand, if $k$ even and $c_0=0$ or $k$ odd and $c_0=1$, by taking 
$\mathbf{t}^{j+1}:=\mathrm{m}\,\mathbf{s}^j$ for $0\leq j\leq k$, and 
$\mathbf{t}^{0}:=\mathrm{e}^{l+1}$, we have
\[
\mathbf{c}=c_0\,\mathbf{c}_1^{m}\sqsubseteq
      \mathrm{m}^{k+2}(0)\,
      \mathbf{s}^k\circ\cdots\circ\mathbf{s}^{1}\circ\mathbf{s}^0\left(0^{l+1}\right)
      =\mathbf{t}^{k+1}\circ\cdots\circ\mathbf{t}^{1}\circ\mathbf{t}^0\left(0^{l+1}\right),
\]
and the lemma follows.
\end{proof}

\medskip

\noindent{\bf{\em Proof of Theorem~\ref{theo:asymptotic}}.} Let us assume that for each 
$\ell\in{\mathbb N}_0$, the stochastic matrix $M_\ell$ is primitive. This implies, by 
the Perron-Frobenius Theorem, that there is a unique probability vector  
$\mu_\ell:\{0,1\}^{\ell+1}\to (0,1]$ such that 
\[
\mu_\ell=\mu_\ell M_\ell \hskip 10pt \text{ and } \hskip 10pt 
\lim_{t\to\infty}\mu_\ell^0M_\ell^t=\mu_\ell,
\]
for every initial probability vector $\mu^0_\ell:\{0,1\}^{\ell+1}\to [0,1]$. Hence,
for any measure $\mu^0$ specifying the distribution of the initial conditions, for each
$\ell\in\mathbb{N}_0$, and for all $\mathbf{a}\in\{0,1\}^{\ell+1}$ we have
\begin{equation}\label{eq:convergence}
\lim_{t\to\infty}\mu^t[\mathbf{a}]=\mu_\ell(\mathbf{a}).
\end{equation}
If in addition the probability vectors $\mu_\ell$ satisfy the compatibility
condition 
\begin{equation}\label{eq:compatibility}
\sum_{x\in\{0,1\}}\mu_{\ell+1}(\mathbf{a}x)=\mu_{\ell}(\mathbf{a}),
\end{equation}
for each $\ell\in\mathbb{N}_0$ and $\mathbf{a}\in\{0,1\}^{\ell+1}$, then Kolmogorov's 
representation theorem implies the existence of a measure $\mu$ on $X$ such 
that $\mu[\mathbf{a}]=\mu_{\ell}(\mathbf{a})$ for 
each $\ell\in\mathbb{N}_0$ and $\mathbf{a}\in\{0,1\}^{\ell+1}$. 
Finally, Equation~\eqref{eq:convergence} ensures the convergence of $\mu^t$ 
towards $\mu$ in the *-weak sense.

\medskip \noindent
The primitivity of $M_\ell$ follows straightforwardly from the following argument. As 
proved in Lemma~\ref{lem:irreducible}, for each pair of words 
$\mathbf{a}, \mathbf{b}\in\{0,1\}^{\ell+1}$, there exist a sequence 
of substitutions such that applied to $\mathbf{a}$ produces a word having $\mathbf{b}$ 
as prefix. 
Now, since all words in $\{\mathrm{e},\mathrm{m}\}^{\ell+1}$ have positive probability, 
then the previous claim implies that $M_\ell^n(\mathbf{a},\mathbf{b})>0$ for some $n>0$, 
which proves that $M_\ell^n$ is irreducible. Now, since the word $00\cdots 0$ occurs as 
the prefix of 
$\mathrm{e}(0)\mathrm{e}(0)\cdots \mathrm{e}(0)$, then $M_\ell(00\cdots 0,00\cdots 0)>0$, 
which implies that $M_\ell$ is aperiodic, and so $M_\ell$ is primitive.

\medskip \noindent
To prove the compatibility condition~\eqref{eq:compatibility}, we should notice that it 
is inherited from the analogous compatibility condition satisfied by all the marginals 
$\mu_\ell^t$ at each time $t\in\mathbb{N}_0$. Indeed, for $t=0$ we obviously have
\[
\sum_{x\in\{0,1\}}\mu^0_{\ell+1}(\mathbf{a}x):=\sum_{x\in\{0,1\}}\mu^0[\mathbf{a}x]=
\mu^0\left(\bigsqcup_{x\in\{0,1\}} [\mathbf{a}x]\right)=\mu[\mathbf{a}]=:
\mu^0_{\ell}(\mathbf{a}),
\]
for each $\ell\in\mathbb{N}_0$ and $\mathbf{a}\in\{0,1\}^{\ell+1}$. Here $\sqcup$ stands 
for the disjoint union. Now, from Equation~\eqref{eq:marginals} it follows that
\begin{eqnarray*}
\sum_{x\in\{0,1\}}\mu^{t+1}_{\ell+1}(\mathbf{a}x)
&=&
\sum_{x\in\{0,1\}}\sum_{\mathbf{s}\in \{\mathrm{e},\mathrm{m}\}^{\ell+2}}
\nu_p[\mathbf{s}]\left(
\sum_{\mathbf{b}\in \{0,1\}^{\ell+2}\atop \mathbf{a}x \sqsubseteq
\prod_{i=0}^{\ell+1} s_i(b_i)} \mu^{t}[\mathbf{b}]\right)\\
&=&
\sum_{\mathbf{s}\in \{\mathrm{e},\mathrm{m}\}^{\ell+2}}\nu_p[\mathbf{s}]\, 
\mu^{t}\left(
\bigsqcup_{x\in\{0,1\}}\bigsqcup_{\mathbf{b}\in \{0,1\}^{\ell+2}\atop \mathbf{a}x 
\sqsubseteq
\prod_{i=0}^{\ell+1} s_i(b_i)} [\mathbf{b}]\right)\\
&=&
\sum_{\mathbf{s}\in \{\mathrm{e},\mathrm{m}\}^{\ell+2}}\nu_p[\mathbf{s}]\, \mu^{t}\left(
\bigsqcup_{\mathbf{b}\in \{0,1\}^{\ell+2}\atop \mathbf{a} \sqsubseteq
\prod_{i=0}^{\ell+1} s_i(b_i)} [\mathbf{b}]\right).
\end{eqnarray*}
Since $|\mathbf{a}|=\ell+1$, the statement 
$\mathbf{a}\sqsubset \prod_{i=0}^{\ell+1} s_i(b_i)$ is equivalent to 
$\mathbf{a}\sqsubset \prod_{i=0}^{\ell} s_i(b_i)$, and we have
\begin{eqnarray*}
\sum_{x\in\{0,1\}}\mu^{t+1}_{\ell+1}(\mathbf{a}x)
&=&
\sum_{\mathbf{s}\in \{\mathrm{e},\mathrm{m}\}^{\ell+2}}\nu_p[\mathbf{s}]\, \mu^{t}\left(
\bigsqcup_{\mathbf{b}\in \{0,1\}^{\ell}\atop \mathbf{a} \sqsubseteq
\prod_{i=0}^{\ell} s_i(b_i)} [\mathbf{b}]\right)\\
&=&
\sum_{\mathbf{s}\in \{\mathrm{e},\mathrm{m}\}^{\ell+1}} \left(
\sum_{\mathbf{b}\in \{0,1\}^{\ell+1}\atop \mathbf{a} \sqsubseteq
\prod_{i=0}^{\ell} s_i(b_i)} \mu^{t}[\mathbf{b}]\right)\sum_{\rho\in\{\mathrm{e},\mathrm{m}\}} 
\nu_p[\mathbf{s}\rho]\\
&=&
\sum_{\mathbf{s}\in \{\mathrm{e},\mathrm{m}\}^{\ell}} \left(
\sum_{\mathbf{b}\in \{0,1\}^{\ell+1}\atop \mathbf{a} \sqsubseteq
\prod_{i=0}^{\ell} s_i(b_i)} \mu^{t}[\mathbf{b}]\right)
=\left(\mu_\ell^t\,M_\ell\right)(\mathbf{a}):=\mu_\ell^{t+1}(\mathbf{a})
\end{eqnarray*}
for each $\ell\in\mathbb{N}_0$ and $\mathbf{a}\in\{0,1\}^{\ell+1}$. The compatibility
condition~\eqref{eq:compatibility} follows by taking the limit
$t\to\infty$ on both sides of the equation, which completes the proof of the theorem.
\rightline{$\Box$}

\medskip\subsection{}\label{sub:scaling}  
This subsection is devoted to the proof of Theorem~\ref{theo:asymptoticscaling}, which 
relies in Lemmas~\ref{theo:upperbound} and~\ref{theo:lowerbound} and other auxiliary 
propositions which we state without proof. The proofs of these auxiliary results is 
referred to Subsections~\ref{subsec:upper}, \ref{subsec:auxiliars} and \ref{subsec:lower}.

\medskip
\begin{lemma}[Upper bound]\label{theo:upperbound}
For each $p\in (0,1)$, the stationary measure $\mu_p$ has decay of correlations bounded above 
by a power law. Indeed, for each $p\in (0,1)$ there exist positive constants $\alpha_p$ and 
$K_p >0$ such that
\[
|C_p(n)|\leq K_p n^{-\alpha_p}
\]
for all $n\geq 2$. 
\end{lemma}

\medskip
\begin{lemma}[Lower bound]\label{theo:lowerbound}
There are constants $0 < b$ and $n_{p^*}\in\mathbb{N}$, such that $C_p(n)\geq n^{-b}$ for 
all $p\in (0,p^*)$ and all $n\geq n_{p^*}$. 
\end{lemma}

\medskip\noindent This lemma has the following straightforward consequences that we will 
use below.
 
\medskip
\begin{corollary}\label{cor:upperbound}
For each $p\in (0,1)$ and  $a < \alpha_p$, there exists $n=n_a$ such that 
\[
C_p(n)\leq n^{-a}
\]
for each $n\geq n_a$.
\end{corollary}

\medskip \noindent
Besides Lemmas~\ref{theo:upperbound} and~\ref{theo:lowerbound}, the proof of 
Theorem~\ref{theo:asymptoticscaling} requires 
some additional notation and preliminary results which we present now.

\medskip \noindent Let $b$ be as in Theorem~\ref{theo:lowerbound}. Then for each 
$\beta>b$ define the function $d_{\beta}:[1,\infty) \to \mathbb{R}$ as follows
\[ d_{\beta}(x):=\sqrt{p(1-p)(2-p)(\beta+1)\log(x)/x} \] 
With this, define the functions $\ell_{\beta,p},u_{\beta,p}: [1,\infty)\to [0,\infty)$ 
given by
\begin{equation*}
\ell_{\beta,p}(x):=\frac{x}{2-p+d_\beta(x)},\hskip 10pt 
                  u_{\beta,p}(x):=\frac{x}{2-p-d_\beta(x)}.
\end{equation*}
Finally, to simplify the expressions that will appear, define
\[ \lambda_p:=(2-p),  \hskip 10pt 
     \phi_{\beta,p}(x):=\ell_{\beta,p}(\lambda_p\,x)/\lambda_p \, \text{ and }\, 
\psi_{\beta,p}(x):=u_{\beta,p}(\lambda_p\,x)/\lambda_p.\]

\medskip \noindent Let us remind that, for $p\in (0,1)$ and each $k, n\in \mathbb{N}$,  
\[\mathcal{W}_p(k,n) := f(p)\nu_p(k,n) + g(p)\nu_p(k,n-1) + h(p)\nu_p(k,n-2),\]
with $f(p):=p(2p-1)$, $g(p):=(1-p)(1-3p)$, and $h(p):=(1-p)^2$ and 
\[\nu_p(k,n):=\nu_p\{\ell(\mathbf{s}_0^{k-2})=n-1\} \equiv 
                             \binom{k-1}{n-k}(1-p)^{n-k}p^{2k-n-1}.\] 
We have the following. 

\newpage
\begin{proposition}\label{lem:Wp>0}
If $p \in (0, p^*)$ and $\lfloor n/2\rfloor < k\leq 2(n-1)/3$, then $\mathcal{W}_p(k,n)>0$.
\end{proposition}

\medskip
\begin{proof}
A simple computation shows that
\begin{eqnarray*}
\mathcal{W}_p(k,n)& =&\frac{(1-p)^{n-k}p^{2k-n}(k-1)!}{(n-k)!(2k-n+1)!}Q_p(n,k), 
                                                \text{ with }\\
          Q_p(k,n)&:=& \left((1-2p)(2n-3k)(2k-n+1)+ p(2n-3k-2)(n-k)\right).
\end{eqnarray*}
Since $p^*<1/2$, then $Q_p(k,n)>0$ for $\lfloor n/2\rfloor < k\leq 2(n-1)/3$. The result 
follows from the fact that $\mathcal{W}_p(k,n)$ and $Q_p(k,n)$ have the same sign.
\end{proof}

\medskip
\begin{proposition}\label{prop:sumnuptail}
For each $\beta>b$ there exists $n_\beta \geq 5$ such that
\[\delta_n:=n^{b}\, \left(\sum_{|n/k-(2-p)|>d_\beta(n)}\nu_p(k,n)\right) 
\leq n^{-(\beta-b)/2}, \]
for all $n\geq n_\beta$. 
\end{proposition}

\medskip
\begin{proposition}\label{prop:correlationtail}
For each $n\geq 5$ and $p \in (0, p^*)$ we have 
\[
\left|\sum_{k > 2(n-1)/3}\mathcal{W}_p(k,n)C_p(k)\right|  \leq 
\frac{n\left(4p(1-p)\right)^{(n-2)/3}}{6}.
\]
\end{proposition}

\medskip 
\begin{proposition}\label{lem:QR}
There exists $x_0\geq e$ such that, for each $x\geq x_0$ there are constants 
$0 < Q(x) < 1 < R(x)$ such that for every $1\leq j\leq k\in\mathbb{N}$ we have
\[ 
Q(x)\,\lambda_p^{k-j}\,x\leq\lambda_p\phi_{\beta,p}^j(\lambda^{k-1}\,x)\leq 
                       u_{\beta,p}^j(\lambda_p^{k-1}\,x)\leq R(x)\,\lambda_p^{k-j}\,x.
\]  
Furthermore, $Q(x),R(x)\rightarrow 1$ as $x\to\infty$. 
\end{proposition}

\medskip\noindent 
{\bf{\em Proof of Theorem~\ref{theo:asymptoticscaling}}.}
Fix $p\in(0,p^*)$ and $\beta > b$, and let $d=d_\beta$, $u=u_{\beta,p}$ and 
$\ell=\ell_{\beta,p}$ be as above. 
Let $f,g,h:[0,1]\to\mathbb{R}$ be as in Equation~\eqref{eq:Wpdekyn}.
Note that $\ell(n)\leq k\leq u(n)$ implies $\left|\frac{n}{k}-(2-p)\right|\leq d(n)$. 
Note also that $|C_p(k)|\leq 1/4$ for all $k$, and that $|f(p)|+|g(p)|+ |h(p)| \leq 2$ 
for all $p\in (0,1)$. From this, using Equation~\eqref{eq:reccorrelation} and  
Proposition~\ref{prop:sumnuptail}, we obtain
\begin{eqnarray}
C_p(n)&\leq& \sum_{\ell(n)\leq k\leq u(n)}C_p(k)\mathcal{W}_p(k,n)
+n^{-b}\, \delta_n \label{eq:upineqcorrelation}\\
C_p(n)&\geq& \sum_{\ell(n)\leq k\leq u(n)} C_p(k)\mathcal{W}_p(k,n)
-n^{-b}\, \delta_n \label{eq:lowineqcorrelation}
\end{eqnarray} 
for each $n\geq n_\beta$.

\medskip\noindent A simple computation shows that for $p<p^*$, there exists 
$n_1=n_1(p,\beta)\in\mathbb{N}$ such that $u(n) < 2(n-1)/3$ for all $n\geq n_1$. Hence,
according to Proposition~\ref{lem:Wp>0},  $\mathcal{W}_p(k,n)>0$  for all $n\geq n_1$ 
and $\ell(n)\leq k\leq u(n)$. In this case we can define a probability distribution 
$k\mapsto \mathbb{P}_p(k,n)$ proportional to $k\mapsto \mathcal{W}_p(k,n)$, in the 
interval $\ell(n)\leq k\leq u(n)$. 

\medskip \noindent For $n\geq\max(n_1,n_{p^*},n_\beta)$ we can use the lower bound of 
Theorem~\ref{theo:lowerbound}, and rewrite Inequalities~\eqref{eq:upineqcorrelation} 
and~\eqref{eq:lowineqcorrelation} as 
\begin{eqnarray*}
C_p(n)&\leq& \left(\sum_{k=\lfloor n/2\rfloor}^n\mathcal{W}_p(n,k)+2\delta_n\right)
\mathbb{E}_{p,n}(C_p), \\
C_p(n)&\geq& \left(\sum_{k=\lfloor n/2\rfloor}^n\mathcal{W}_p(n,k)-2\delta_n\right)
\mathbb{E}_{p,n}(C_p),
\end{eqnarray*}  
where $\mathbb{E}_{p,n}(C_p)$ denotes the mean value of $C_p$ with respect to 
$\mathbb{P}_p(k,n)$. Using the fact that $\sum_{k=\lfloor n/2\rfloor}^n 
\mathcal{W}_p(k,n)=(1-2p)(2-3p)/(2-p)-2p(p-1)^n$, we obtain
\begin{eqnarray*}
C_p(n)&\leq& \left(\frac{(1-2p)(2-3p)}{2-p}+3\delta_n\right)\mathbb{E}_{p,n}(C_p)\\
      &\leq& \left(\frac{(1-2p)(2-3p)}{2-p}+3\delta_n\right)\max_{\ell(n) 
      \leq k\leq u(n)}C_p(k),\\
C_p(n)&\geq& \left(\frac{(1-2p)(2-3p)}{2-p}-3\delta_n\right)\mathbb{E}_{p,n}(C_p)\\
      &\geq& \left(\frac{(1-2p)(2-3p)}{2-p}+3\delta_n\right)\min_{\ell(n) 
      \leq k\leq u(n)}C_p(k).
\end{eqnarray*}
We can rewrite these inequalities as
\[
\lambda_p^{-\beta_p-\eta_{\lambda_p x}}\min_{\lambda_p\phi(x)
          \leq y\leq \lambda_p\psi(x)}C_p([y])
          \leq C_p([\lambda_p\,x])
          \leq \lambda_p^{-\beta_p+\eta_{\lambda_p x}}\max_{\lambda_p\phi(x)
          \leq y\leq \lambda_p\psi(x)}C_p([y]),
\]
with $\eta_x:=3\lambda_p^{-\beta_p}\,(x-1)^{-(\beta-b)/2}/\log(\lambda_p)
           \geq 3\lambda_p^{-\beta_p}\,\delta_{[x]}/\log(\lambda_p)$ 
and $\beta_p$ as defined in Equation~\eqref{eq:scalingexponent}. It follows from here, 
by a straightforward induction, that
\begin{eqnarray*}
C_p([\lambda_p^k\,x])&\leq& 
               m_p^{-k\,\beta_p+\sum_{j=0}^{k-1}\eta_{\lambda_p\phi^j(\lambda_p^{k-1}x)}}
                           \max_{\lambda_p\phi^k(\lambda_p^{k-1}x)
                      \leq z\leq \lambda_p\psi^2(\lambda_p^{k-1}x)}C_p([z]),\\
C_p([\lambda_p^k\,x])&\geq& m_p^{-k\,\beta_p-\sum_{j=0}^{k-1}
                            \eta_{\lambda_p\phi^j(\lambda_p^{k-1}x)}}
                            \min_{\lambda_p\phi^k(\lambda_p^{k-1}x)\leq z
                       \leq \lambda_p\psi^2(\lambda_p^{k-1}x)}C_p([z]),                        
\end{eqnarray*}
for all $k\in\mathbb{N}$ and $x\geq \max(n_1,n_{p^*},n_\beta)$. 

\medskip \noindent Using the above estimates and taking into account 
Corollary~\ref{cor:upperbound} and Theorem~\ref{theo:lowerbound}, we obtain, for all 
$p\in (0,p^*)$ and all $x\geq n_0:=\max(n_1,n_{p^*},n_\beta,n_\alpha)$, 
the inequalities
\begin{eqnarray*}
C_p([\lambda_p^k\,x])&\leq& 
               \lambda_p^{-k\,\beta_p+\sum_{j=0}^{k-1}\eta_{Q(x)\,\lambda_p^{k-j}\,x} }
               \left(Q(x)\,x\right)^{-a}
                       \leq            
               \lambda_p^{-k\,\beta_p+\epsilon_p(x)}\, \left(Q(x)\,x\right)^{-a}, \\
C_p([\lambda_p^k\,x])&\geq& 
               \lambda_p^{-k\,\beta_p-\sum_{j=0}^{k-1}\eta_{Q(x)\,\lambda_p^{k-j}\,x}}
               \left(R(x)\,x\right)^{-b}  
                       \geq 
               \lambda_p^{-k\,\beta_p-\epsilon_p(x)}\,\left(R(x)\,x\right)^{-b},                       
\end{eqnarray*}
where
\[
\epsilon_p(x):=\frac{3\times\left(2\,\lambda_p\right)^{(\beta-b)/2}
                    }{\left(Q(x)\,x\right)^{(\beta-b)/2}\lambda_p^{\beta_p}\,\log(\lambda_p)
\left(\lambda_p^{(\beta-b)/2}-1\right)}\rightarrow 0 \text{ as } x\to \infty.
\]
For $n\geq n_0$ let $k\in\mathbb{N}$ be such that $n_0\lambda_p^k\leq n <n_0\lambda_p^{k+1}$,
and let $x_0:=n/\lambda_p^k\in [n_0,\lambda_p\,n_0)$. With this, the inequalities above 
become
\[
R(x_0)^{-b}\,x_0^{\beta_p-b}\lambda_p^{-\epsilon_p(x_0)}\,n^{-\beta_p} \leq  C_p(n) 
\leq
Q(x_0)^{-a}\,x_0^{\beta_p-a}\lambda_p^{-\epsilon_p(x_0)}\, n^{-\beta_p},
\]
and the result follows by taking
\begin{eqnarray*}
A_p&:=&\min_{n_0\leq x_0< \lambda_p\,n_0} 
       R(x_0)^{-b}\,x_0^{\beta_p-b}\lambda_p^{-\epsilon_p(x_0)},\\
B_p&:=&\max_{n_0\leq x_0< \lambda_p\,n_0} 
       Q(x_0)^{-a}\,x_0^{\beta_p-a}\lambda_p^{-\epsilon_p(x_0)}.
\end{eqnarray*}
\rightline{$\Box$}

\subsection{}\label{subsec:upper}
\medskip\noindent {\bf{\em Proof of Lemma~\ref{theo:upperbound}}.} For each 
$n\in\mathbb{N}$ let $\bar{C}_p(n):=\max\{|C_p(m)|:\ m\geq n\}$.
Obviously $n\mapsto\bar{C}_p(n)$ is a non-increasing upper bound for $n\mapsto|C_p(n)|$. 

\medskip \noindent For each $n\in\mathbb{N}$ let 
$S_p(n):= \sum_{k=\lfloor n/2\rfloor}^n \nu_p(k,n)$. 
From Equation~\eqref{eq:reccorrelation} we readily derive the inequality
\begin{eqnarray*}
|C_p(n)|&\leq& \sum_{k=\lfloor n/2\rfloor}^n |C_p(k)| 
             \left(|f(p)|\,\nu_p(k,n)+|g(p)|\,\nu_p(k,n-1)+|h(p)|\,\nu_p(k,n-2)\right)\\
       &\leq& \bar{C}_p(\lfloor n/2\rfloor)
              \left(|f(p)|S_p(n)+|g(p)|S_p(n-1)+ |h(p)|S_p(n-2)\right),
\end{eqnarray*}
which holds for each $n\geq 2$. Hence,
\begin{eqnarray*}
\bar{C}_p(n)&\leq &\max_{m\geq n} \bar{C}_p(\lfloor m/2\rfloor)
                   \left(|f(p)|S_p(m)+|g(p)|S_p(m-1)+ |h(p)|S_p(m-2)\right)\\
            &\leq& \bar{C}_p(\lfloor n/2\rfloor)\max_{m\geq n}
                   \left(|f(p)|S_p(m)+|g(p)|S_p(m-1)+ |h(p)|S_p(m-2)\right) 
\end{eqnarray*}
It can be easily verified that $S_p(n+1)=pS_p(n)+(1-p)S_p(n-1)$, and solving this recursion 
from $S_p(0)=0$ and $S_p(1)=1$, we obtain $ S_p(n)=(1-(p-1)^n)/(2-p)$. Hence,
\[
\max_{m\geq n}S_p(m)\leq \bar{S}_p(n):=\frac{1+(1-p)^n}{2-p}
\]
for each $n\in\mathbb{N}$, and using the previous inequality for $\bar{C}_p(n)$ it follows 
that
\begin{equation}\label{eq:barreccorrelation}
\bar{C}_p(n)\leq \bar{C}_p(\lfloor n/2\rfloor)\left(|f(p)|\bar{S}_p(m)+
                   |g(p)|\bar{S}_p(m-1)+|h(p)|\bar{S}_p(m-2)\right) .
\end{equation}

\medskip\noindent
We have three cases:

\begin{itemize}
\item[\emph{a)}] 
For $0< p\leq 1/3$ we have $f(p)\leq 0 \leq g(p), h(p)$. In this range~\eqref{eq:barreccorrelation} becomes 
\[
\bar{C}_p(n)\leq \bar{C}_p(\lfloor n/2\rfloor)\left((1-2p)+ \frac{2(1-p-p^2)}{2-p}(1-p)^n\right). 
\]
\item[\emph{b)}] For $1/3\leq p \leq 1/2$ we have $f(p), g(p)\leq 0\leq h(p)$, hence
\[
\bar{C}_p(n)\leq \bar{C}_p(\lfloor n/2\rfloor)\left(\frac{p(3-4p)}{2-p}+2p(1-p)^n\right).
\]
\item[\emph{c)}] Finally, for $1/2\leq p < 1$ we have $g(p)\leq 0\leq f(p), h(p)$, and so
\[
\bar{C}_p(n)\leq \bar{C}_p(\lfloor n/2\rfloor)\left(\frac{p}{2-p}+\frac{2p(p+1)}{2-p}(1-p)^n\right).
\]
\end{itemize}

\noindent In all three cases, the inequality has the form
\[
\bar{C}_p(n)\leq \bar{C}_p(\lfloor n/2\rfloor)\left(\eta_p+A_p\,(1-p)^n\right),
\]
with $\eta_p\in (0,1)$ and $A_p>0$. For $n\in\mathbb{N}$, let $k=k(n)$ be such that
$2^k\leq n < 2^{k+1}$. Iterating the previous inequality, and taking into account that 
$\bar{C}_p(1)\leq 1/4$ we obtain
\[
\bar{C}_p(n)\leq \bar{C}_p(2^k) \leq \frac{\eta_p^k}{4}\,\prod_{j=1}^k
\left(1+ A_p\eta_p^{-1}\,(1-p)^{2^j}\right).
\]
Since $2^{k+1}>n$ then $k > \log(n)/\log(2)-1$, it follows that
\[
|C_p(n)|\leq \bar{C}_p(n)\leq \frac{\eta_p^{\log(n)/\log(2)-1}}{4}\,
             \prod_{j=1}^k \left(1+A_p\,\eta_p^{-1}\,(1-p)^{2^j}\right),
\]
and the result follows with $\alpha_p:=-\log(\eta_p)/\log(2)$ and
\[
K_p:=\frac{1}{4\eta_p}\prod_{j=1}^\infty \left(1+A_p\,\eta_p^{-1}\,(1-p)^{2^j}\right)
    \leq \frac{\exp\left(A_p\,(p\,\eta_p)^{-1}\right)}{4\eta_p}<\infty.
\]
\rightline{$\Box$}

\medskip
\subsection{}\label{subsec:auxiliars} This section is devoted to the proof of 
Propositions~\ref{prop:sumnuptail}, \ref{prop:correlationtail} and \ref{lem:QR}.

\medskip

\begin{proposition}~\label{lem:Ipdeq}
For each $p\in (0,1)$ the function
\[q\mapsto I_p(q):=q/(q+1)\log\left(q/(1-p)\right)+(1-q)/(q+1)\log\left((1-q)/p\right)\]
is non-negative, strictly convex, and satisfies
\[ \min\{I_p(q): q\in (0,1)\}=I_p(1-p)\equiv 0. \]
\end{proposition}

\medskip
\begin{proof}
Since $x\mapsto -\log(x)$ is a concave function,
then
\begin{eqnarray*}
I_p(q)
&  = &\frac{1}{q+1}\left(-q\,\log\left(\frac{1-p}{q}\right)-
                    (1-q)\log\left(\frac{p}{1-q}\right)\right)\\
&\geq&\frac{1}{q+1}\left(-\,\log\left(q\frac{1-p}{q}+(1-q)\frac{p}{1-q}\right)\right)=0.
\end{eqnarray*}
On the other hand,
\[
\frac{dI_p(q)}{dq}     =
\frac{1}{(q+1)^2}\left(\log\left(\frac{q}{1-p}\right)-2\log\left(\frac{1-q}{p}\right)\right)
                      =0\Leftrightarrow q=1-p.
\]
In this way we prove that $I_p$ is non-negative with minimum at $q=1-p$.
Now,
\[
\frac{d^2I_p(q)}{dq^2} =
\frac{1}{q(1-q^2)}+\frac{2}{(q+1)^3}\left(2\log\left(\frac{1-q}{p}\right)+
                        \left(\frac{1-p}{q}\right)\right) > 0
\]
for all $p,q\in (0,1)$. For this, note that if $1-q\geq p$ then $1-p\geq q$, and in this case
we have
\[
\frac{d^2I_p(q)}{dq^2} \geq \frac{1}{q(1-q^2)} > 0,
\]
otherwise, for $1-q < p$ then $1-p < q$, and taking into account that $-\log(x)\geq 1-x$, we
obtain
\begin{eqnarray*}
\frac{d^2I_p(q)}{dq^2}& =  & \frac{1}{q(1-q^2)}-\frac{2}{(q+1)^3}
                        \log\left(\left(\frac{(1-q)^2}{p}\right)\frac{(1-p)}{q}\right)\\
                     &\geq& \frac{1}{q(1-q^2)}+\frac{2}{(q+1)^3}
                           \left(1-\left(\frac{(1-q)^2}{p}\right)\frac{(1-p)}{q}\right)\\
                     &\geq& \frac{1}{q(q+1)}\left(\frac{1}{1-q}-2\frac{(1-q)q}{(q+1)^2}\right)
                           =\frac{1+3q^2}{q(q+1)^3(1-q)} > 0.
\end{eqnarray*}
Therefore $I_p$ is strictly convex.
\end{proof}

\medskip
\begin{proposition}~\label{lem:Stirling}
For $p\in (0,1)$ and  $k, n\in \mathbb{N}$ let
\[ \nu_p(k,n):= \binom{k-1}{n-k}(1-p)^{n-k}p^{2k-n-1}.  \]
Then, for each $n\geq 3$ there are constants $0\leq A^-\leq A^+\leq1$ such that
\[ e^{-n\,I_p(n/k-1)}\,A^{-} \leq\nu_p(k+1,n+1)\leq e^{-n\,I_p(n/k-1)}\,A^{+}, \]
for all $\lfloor n/2\rfloor \leq k\leq n$.
\end{proposition}

\begin{proof}
\noindent A very useful refinement of Stirling's approximation, first published 
in~\cite{robbins1955}, states that
\[
\sqrt{2\pi\,n}\,n^n\,\exp\left(-n + \frac{1}{12n+1}\right)
\leq n!\leq
\sqrt{2\pi\,n}\,n^n\,\exp\left(-n + \frac{1}{12n}\right)
\]
for all $n\in\mathbb{N}$. Hence, for each $p\in (0,1)$, $n\geq 3$, and $[n/2] < k <n$,
we have
\[\exp\left(-\epsilon_{n,k}\right) \leq 
  \frac{\sqrt{2\pi\,k\,(n/k-1)\,(2-n/k)}}{k^k\,(n-k)^{-(n-k)}\,(2k-n)^{-(2k-n)}}
  \times\binom{k}{n-k} \leq \exp\left(+\epsilon_{n,k}\right),\]
with $\epsilon_{n,k}=(4\,\min(n-k,2k-n))^{-1}$. A simple computation shows that
\[k^k\,(n-k)^{-(n-k)}\,(2k-n)^{-(2k-n)}(1-p)^{n-k}p^{2k-n}=\exp\left(-n\,I_p(1-k/n)\right),\]
with  $I_p(q):=q/(q+1)\log\left(q/(1-p)\right)+(1-q)/(q+1)\log\left((1-q)/p\right)$ for 
each $q\in(0,1)$. Hence
\begin{equation}\label{eq:stirling}
\frac{e^{-n\,I_p(n/k-1)-\epsilon_{n,k}}}{\sqrt{2\pi\,k\,(n/k-1)(2-n/k)}} 
\leq
\nu_p(k+1,n+1)
\leq
\frac{e^{-n\,I_p(n/k-1)+\epsilon_{n,k}}}{\sqrt{2\pi\,k\,(n/k-1)(2-n/k)}},
\end{equation}
for each $n\geq 3$ and $\lfloor n/2\rfloor\leq k\leq n$. On the other hand,
\begin{eqnarray*}
\nu_p(n/2+1,n+1)&=& (1-p)^{n/2}\equiv \lim_{q\to 1}\exp(-n\,I_p(q)),\\
\nu_p(n+1,n+1)  &=& p^n\equiv \lim_{q\to 0}\exp(-n\,I_p(q)).
\end{eqnarray*}
Thus, using
\[A^{\pm}:=\left\{\begin{array}{lr}  
 e^{\pm \left(4\,\min(n-k,2k-n)\right)^{-1}}\left(2\pi\,k\,(n/k-1)(2-n/k)\right)^{-1/2}
                                   & \text{ if }\ n/2< k < n,\\
                                 1 & \text{ otherwise,}
           \end{array}\right.\]
we can extend~\eqref{eq:stirling} to
\[e^{-n\,I_p(n/k-1)}\,A^{-} \leq\nu_p(k+1,n+1)\leq e^{-n\,I_p(n/k-1)}\,A^{+} \]
which holds for all $n\geq 3$ and $\lfloor n/2\rfloor \leq k\leq n$. Finally, a simple 
computation shows that
\[A^{+}\leq \max\left(1, 
\frac{e^{1/4}\sqrt{n-1}}{\sqrt{2\pi(n-2)}},\frac{e^{1/4}\sqrt{n+2}}{\sqrt{4\pi(n-2)}}\,
                \right)=1,\]
for $n\geq 3$.
\end{proof}

\medskip\noindent{{\bf{\em Proof of Proposition~\ref{prop:sumnuptail}}.}
We have already proved that the function $q\mapsto I_p(q)$ is
non-negative, vanishes only at $q=1-p$, and is strictly convex. Hence,  Taylor's Theorem
ensures that
\[
(q-(1-p))^2\times\min_{|q-(1-p)|\leq \epsilon}\left.\frac{d^2I_p}{dq^2}\right|_{q}
\leq I_p(q)\leq
(q-(1-p))^2\times\max_{|q-(1-p)|\leq \epsilon}\left.\frac{d^2I_p}{dq^2}\right|_{q}.
\]
Since $d^2I_p/dq^2|_{q=1-p}=(p(1-p)(2-p)))^{-1}$ and $q\mapsto d^2I_p/dq^2$ is continuous,
then, for all $\alpha \in (0,1)$ there exists $\epsilon_\alpha >0$ such that
\[
\frac{\alpha\,(q-(1-p))^2}{p(1-p)(2-p))} \leq I_p(q)
\leq \frac{\alpha^{-1}\,(q-(1-p))^2}{p(1-p)(2-p))}
\]
for all $q\in (1-p-\epsilon_\alpha,1-p+\epsilon_\alpha)$. With this, and using 
Proposition~\ref{lem:Stirling}, it follows that
\[
0\leq \sum_{|k/n-(2-p)|\geq \epsilon}\nu_p(k+1,n+1)\leq
n\times\exp(-nI_p(\epsilon))
\leq n\times \exp\left(\frac{-n\,\alpha\,\epsilon_\alpha^2}{p(1-p)(2-p)}\right)
\]
for all $\epsilon\leq \epsilon_\alpha$ and $n\geq 3$. By taking $n$ such that
$d_\beta(n) =\sqrt{p(1-p)(2-p)(\beta+1)\log(n)/n}\leq \epsilon_\alpha$ we obtain
\[
0\leq \sum_{|k/n-(2-p)|\geq d_\beta(n)}\nu_p(k+1,n+1)\leq n^{1-\alpha(\beta+1)},
\]
and the claim follows if we fix $\alpha=(b+\beta+2)/(2\beta+2)$
and $n_\beta$ such that $d_\beta(n)\leq \epsilon_\alpha$ for all $n\geq n_\beta $.

\rightline{$\Box$}

\medskip
\noindent{\bf{\em Proof of Proposition~\ref{prop:correlationtail}}.}
Fix $n\geq 5$ and $p\in (0,1)$. Since $C_p(k) \leq 1/4$ for all $k\in \mathbb{N}$ and 
$\nu_p(n,k) >0$, then 
\begin{eqnarray*}
\left|\sum_{k > 2 n/ 3}\mathcal{W}_p(k,n)C_p(k)\right| 
    & \leq &\frac{1}{4}\,\sum_{k > 2n/3}\left| \mathcal{W}_p(k,n)\right|\\
    &\leq  &\frac{n\left(\left| f(p) \right|  + \left| g(p) \right| +
             \left| h(p) \right|\right)}{12}
             \left(\max_{2 n / 3 < k,\, n-2\leq m\leq n}\nu_p(k,m)\right).
\end{eqnarray*}
Then, using Proposition~\ref{lem:Stirling} and the fact that $\left| f(p) \right|  + 
\left| g(p) \right| +\left| h(p) \right| \leq 2$ for all $p \in (0,1)$, we obtain,
\[ \left|\sum_{k > 2n/3}\mathcal{W}_p(k,n)C_p(k)\right|  
\leq 
\frac{n}{6} \exp\left( -(n-2)\min_{2 n / 3 < k,\, n-2\leq m\leq n}\, 
                 I_p\left( (m-1)/(k-1) - 1\right)\right) 
\]
for each $n\geq 5$.
Now, according to Proposition~\ref{lem:Ipdeq}, the function $q\mapsto I_p(q)$ is 
monotonously decreasing in $[0,1-p]$ and monotonously decreasing in $[1-p,1]$. Since for 
each $p\in (0,p^*)$ and $n\geq 5$ we have $0 < (m-1)/(k-1)-1 < 1/2-3/2n < 1/2 \leq 1 - p^*$, 
for each $n-2\leq m\leq n$ and $k\geq 2 n / 3$. Hence, 
\[ \min_{2 n / 3 < k,\, n-2\leq m\leq n}\, I_p\left( (m-1)/(k-1) - 1 \right) 
    > I_p(1/2)  = \frac{1}{3}\log\left(\frac{1}{4p(1-p)}\right).\]
which leads to the desired result, namely
\[\left|\sum_{k > 2(n-1)/3}\mathcal{W}_p(k,n)C_p(k)\right|  
            \leq \frac{n\left(4p(1-p)\right)^{(n-2)/3}}{6}.\]

\rightline{$\Box$}


\medskip \noindent {\bf{\em Proof of Proposition~\ref{lem:QR}}}.
A straightforward computation shows that
\[
\lambda_p\,\phi^{j}(\lambda_p^{k-1}\,x)=\ell(\lambda_p^k\,x) \text{ and } 
\lambda_p\,\psi^{j}(\lambda_p^{k-1}\,x)=u(\lambda_p^k\,x),
\]
for all $x\geq e$, $k\in \mathbb{N}$ and $1\leq j\leq k$. 
It is easily checked that, for each $p\in (0,1)$ and $\beta > b$, there exists $x_1\geq e$ 
such that both $\ell$ and $u$ are increasing functions in $[x_0,\infty)$. 

\medskip\noindent
For each $p\in(0,1)$ and $x\geq x_1$ let $Q(x)$ be the largest solution to
\[
Q(x)=\exp\left(-\frac{d(x)}{\sqrt{Q(x)}\,\lambda_p}
\sum_{m=0}^{\infty}\sqrt{\frac{m+1}{\lambda_p^m}}\right).
\]
It is not difficult to check that $Q(x)\in(0,1)$ and since $d(x)\rightarrow 0$ as 
$x\to\infty$, then $Q(x)\rightarrow 1$ as $x\to\infty$.

\medskip\noindent Now, fix $k\in\mathbb{N}$, and $x_0\geq x_1$ so large that 
$\lambda_p\,Q(x) \geq 1$ for all $x\geq x_0$. Let $Q_{k,0}(x):=1$, and define recursively
\[
Q_{k,j+1}(x):=\frac{Q_{k,j}(x)}{1+\lambda_p^{-(k-j)/2-1}\,Q(x)^{-1/2}\,\sqrt{k-j+1}\,d(x)}
\]
for $0\leq j\leq k-1$. Clearly 
\begin{eqnarray*}
1\geq 
Q_{k,j}(x)&=&\prod_{i=0}^{j-1}
\left(1+\lambda_p^{-(k-i)/2-1}Q(x)^{-1/2}\,\sqrt{k-i+1}\,d(x)\right)^{-1}\\
&\geq& \exp\left(-\frac{d(x)}{\sqrt{Q(x)}}
               \sum_{i=0}^{j-1}\sqrt{k-i+1}\,\lambda_p^{-(k-i)/2-1}\right)\\
&\geq& \exp\left(-\frac{d(x)}{\sqrt{Q(x)}\,\lambda_p}
\sum_{m=0}^{\infty}\sqrt{\frac{m+1}{\lambda_p^m}}\right)=Q(x).
\end{eqnarray*}

\medskip \noindent Since $x\geq e > \lambda_p$, then $ x^{k+1}\geq \lambda_p^k\,x$, 
and therefore
\begin{eqnarray*}
d(\lambda_p^k\,x)&=&\sqrt{\frac{p(1-p)(2-p)(\beta+1)\log(\lambda_p^k\,x)}{\lambda_p^k\,x}}\\
                &\leq&\lambda_p^{-k/2}\sqrt{\frac{p(1-p)(2-p)(\beta+1)(k+1)\log(x)}{x}}
                 =\sqrt{\frac{k+1}{\lambda_p^k}}\,d(x).
\end{eqnarray*}
Hence, for $j=1$ we have
\[
\ell(\lambda_p^k\,x)=\frac{\lambda_p^{k-1}\,x}{1+d(\lambda_p^k\,x)/\lambda_p}
                    \geq 
                    \frac{\lambda_p^{k-1}\,x}{1+\lambda_p^{-k/2-1}\sqrt{k+1}\,d(x)}
                    =Q_{k,1}(x)\,\lambda_p^{k-1}\,x.
\]   
Suppose that $\ell^j(\lambda_p^k\,x)\geq Q_{k,j}(x)\,\lambda^{k-j}\,x$ for $j<k$. 
Since $Q_{k,j}(x)\,\lambda_p^{k-j}\,x \leq \lambda_p^{k-j}\,x \leq x^{k-j+1}$, then
\begin{eqnarray*}
d(Q_{k,j}\,\lambda_p^{k-j}\,x)&=&\sqrt{\frac{p(1-p)(2-p)(\beta+1)
                                  \log(Q_{k,j}(x)\,\lambda_p^{k-j}\,x)
                                          }{\lambda_p^{k-j}\,Q_{k,j}(x)\,x}}\\
                              &\leq&\lambda_p^{-(k-j)/2}Q_{k,j}(x)^{-1/2}\sqrt{k-j+1}\,d(x)\\
                              &\leq& \lambda_p^{-(k-j)/2}Q(x)^{-1/2}\sqrt{k-j+1}\,d(x). 
\end{eqnarray*}
Taking into account that $y\mapsto \ell(y)$ is an increasing function for $y\geq x_0$, and 
since $Q_{k,j}(x)\,\lambda_p^{k-j}\,x\geq Q(x)\,x\geq x_0$, then
\begin{eqnarray*}
\ell^{j+1}(\lambda^k_p\,x)&\geq& 
\frac{Q_{k,j}(x)\,\lambda_p^{k-j}\,x}{\lambda_p+d(Q_{k,j}(x)\,\lambda_p^{k-j}\,x)}\\
                           &\geq& 
\frac{Q_j(x)\,\lambda_p^{k-j-1}\,x}{1+\lambda_p^{(k-j)/2-1}\,Q(x)^{-1/2}\,
                                                                 \sqrt{k-j+1}\,d(x)}\\
                           &= & Q_{k,j+1}(x)\,\lambda_p^{k-j-1}\,x.
\end{eqnarray*}
In this way we have proved that
\[
\ell^{j}(\lambda^k_p\,x)\geq Q_{k,j}(x)\,\lambda_p^{k-j}\,x\geq Q(x)\,\lambda_p^{k-j}\,x,
\]
for all $k\in\mathbb{N}$ and $1\leq j\leq k$, and $x\geq x_0$. 

\medskip\noindent By taking $R=R(x)$ the smallest solution to the equation
\[
R=\exp\left(\frac{d(x)}{\sqrt{R}\,\lambda_p}
                        \sum_{m=0}^{\infty}\sqrt{\frac{m+1}{\lambda_p^m}}\right),
\]
the previous argument can be easily adapted to deduce 
\[
u^{j}(\lambda^k_p\,x)\leq R(x)\,\lambda_p^{k-j}\,x,
\]
for all $k\in\mathbb{N}$ and $1\leq j\leq k$, and $x\geq x_0$.

\rightline{$\Box$}

\medskip\subsection{}\label{subsec:lower}
{\bf{\em Proof of Lemma~\ref{theo:lowerbound}}.}
Suppose that $n^*\geq 5$ and $b > 0$ are such that $C_p(k)\geq n^{-b}$ for all 
$\lfloor n^*/2\rfloor\leq  k < n^*$. Using~\eqref{eq:reccorrelation}, and using 
Proposition~\ref{lem:Wp>0} and Proposition~\ref{prop:correlationtail}, we obtain
\begin{eqnarray*}
C_p(n)&\geq& \left(\sum_{ k < 2(n-1)/3}\mathcal{W}_p(k,n)\right)
     \left(\min_{ k < 2(n-1)/3}C_p(k)\right)-\frac{n}{6}\left(4p(1-p)\right)^{(n-2)/3} \\
      &\geq&\left(\sum_{k \leq n}\mathcal{W}_p(k,n)\right)
     \left(\frac{2(n-1)}{3}\right)^{-b}-\frac{n}{6}\left(4p(1-p)\right)^{(n-2)/3},         
\end{eqnarray*}
for $n^*\leq n < (3n^*+1)/2$. Since 
$\sum_{k=\lfloor n/2\rfloor}^n \nu_p(k,n)=(1-(p-1)^n)/(2-p)$, then 
$\sum_{k \leq n}\mathcal{W}_p(k,n)=(1-2p)(2-3p)/(2-p)-2p(p-1)^n$, 
for all $n\geq 2$. From this it follows that
\[
C_p(n)\geq n^{-b} \left(\left( \frac{(1-2p)(2-3p)}{2-p}-2p(p-1)^n \right) 
                  \left(\frac{3n}{2}\right)^{b}
               -\frac{n^{1+b}\left(4p(1-p)\right)^{(n-2)/3} }{6}  \right),
\]
for $n^*\leq n < (3n^*+1)/2$. Now, if $n^*$ is large enough so that 
\[
\left( \frac{(1-2p)(2-3p)}{2-p}-2p(p-1)^n \right) 
\left(\frac{3}{2}\right)^{b} -\frac{n^{1+b}\left(4p(1-p)\right)^{(n-2)/3} }{6}  > 1
\]
for all $n\geq n^*$, then we have $C_p(n)\geq n^{-b}$ for 
$\lfloor n^*/2\rfloor\leq n < (3n^*+1)/2$. In this way, the interval where the lower bound 
$C_p(n)\geq n^{-b}$ holds, enlarges from $\lfloor n^*/2\rfloor\leq n < n^*$ to
$\lfloor n^*/2\rfloor\leq n < (3n^*+1)/2$. This constitutes the induction step from which 
it follows that $C_p(n)\geq n^{-b}$ for all $n\geq \lfloor n^*/2\rfloor$ and $p\in (0,p^*)$. 
The validity of this induction depends on the existence of $n^*$ and $b$ such that
$C_p(k)\geq n^{-b} $ for all  $\lfloor n^*/2\rfloor\leq  k < n^*$  and all $p\in(0,p^*)$, 
and 
\[
\left( \frac{(1-2p)(2-3p)}{2-p}-2p(p-1)^n \right) 
\left(\frac{3}{2}\right)^{b_p}-\frac{n^{1+b_p}\left(4p(1-p)\right)^{(n-2)/3} }{6}  > 1
\] 
for all $n\geq n^*$ and all $p\in(0,p^*)$. We found such $b$ and $n$ by 
using~\eqref{eq:explicitrec}, which allows us to explicitly compute $C_p(n)$ as function 
of $p$. By doing so, we verify that $C_p(n)\geq n^{-3}$ for $n\leq 402$ and
$p\in (0,p^*)$. We also verified that the function 
\[
p\mapsto \left(\frac{(1-2p)(2-3p)}{2-p}-2p(p-1)^n \right) 
         \left(\frac{3}{2}\right)^{3} -\frac{n^{4}\left(4p(1-p)\right)^{(n-2)/3} }{6}
\]
increases monotonously with $n$ for all $n\geq 60$. Since 
\[
\min_{p\leq p^*}B_p(402)=B_{p^*}(402)\approx 1.0015,
\]
then we can choose $n^*=402$ and $b=3$. The theorem follows with $b=3$ and 
$n_{p^*}=\lfloor n^*/2\rfloor=201$.
										 
\rightline{$\Box$}

\section{Conclusions.}

\medskip\subsection{Summary.} 
We approach the expansion-modification system considering the action of the 
expansion-modification dynamics over the space of Borel measures on 
$\{0,1\}^{{\mathbb N}_0}$. Inside this framework, we have proved the existence and 
uniqueness of the stationary measure, which attracts all initial distributions as times 
goes to infinity~(Theorem~\ref{theo:asymptotic}). We have also proved that the 
stationary measure has correlations decaying faster than a certain power-law 
(Theorem~\ref{theo:upperbound}). Although the calculations leading to these results
strongly relay on a recurrence relation very specific to this system, some degree of
generality could be expected. For instance, existence and uniqueness of the stationary 
measure depends only on the primitivity of the stochastic matrices describing the dynamics 
of the finite-size marginals. Following a simple argument (see Lemma 2.4.4. 
in~\cite{2012Koslicki}), one can easily prove that the primitivity of the matrix associate 
to the $1$-marginal implies the primitivity of the matrices associated to all marginals, 
from which it follows the *-weak convergence of the distributions towards a unique 
stationary measure. 
On the other hand, decay of correlations and asymptotic scaling of the correlation function
could be proved  for cases where recurrence relations similar to~\eqref{eq:reccorrelation} 
are satisfied. For this kind of increasingly complex recurrence relations, which cannot 
be solved in closed form, one cannot expect to obtain an exact scaling behavior. 

\medskip \noindent The main contribution of this work is the rigorous proof of the 
asymptotic scaling of the correlation function, for a rather large interval of mutation 
probabilities (Theorem~\ref{theo:asymptoticscaling}). 
In order to prove this it was necessary to find a power law bounding from below the 
correlation function (Theorem~\ref{theo:lowerbound}). 
The validity of is result, and consequently of our main theorem, could in principle be 
extended to the range $0<p<1/2$, where it seems that $C_p(n)>0$ for all $n$ sufficiently 
large. However, our technique depends on the explicitly computation of $p\mapsto C_p(n)$ 
for values of $n$ in the range were $C_p(n)>0$, and we observe that this range diverges 
as $p$ approaches $1/2$. Hence, the choice of the range $(0,p^*)$ is rather arbitrary and 
of practical nature.

\medskip \noindent \subsection{Scaling.} 
The argument developed in Section~\ref{sec:heuristic} suggests that the scaling property 
extends to the whole range of mutation probabilities and that  the corresponding scaling 
exponent varies piece-wise smoothly as function of that parameter. The same argument allows 
us to furnish an explicit expression for the scaling exponent that fits very well the 
exponent calculated by numerically solving the recurrence relation~\eqref{eq:reccorrelation}. 

\medskip \noindent 
The scaling behavior of a system is usually detected by observing a power law behavior in 
its power spectrum. It can be easily verified that a power law behavior in the correlation
function implies a power law behavior for the power spectrum 
$f(\omega):=\left|{\mathcal F}(C_p)(\omega)\right|$ where 
${\mathcal F}(C_p)$ denotes the discrete Fourier transform of the correlation function. 
In our case, a straightforward computation shows that 
$f(\omega)={\mathcal O}(\omega^{-\alpha_p})$ with
\[
\alpha_p:=1-\beta_p=\frac{\log(2-p)-\log(1-2p)-\log(2-3p)}{\log(2-p)}.
\]

\medskip\subsection{Related work.} 

\medskip\noindent In~\cite{messer&al2005}, Messer, Arndt and L\"assing
study a model generalizing the expansion-modification systems for which they deduce an
asymptotic scaling behavior and a closed expression for the scaling exponent. Although we 
have followed similar ideas, their model evolves in continuous time. Besides, ours are the 
first rigorous results concerning those kind of models.
In~\cite{mansilla2000multiscaling}, Mansilla and Cocho analyze the correlation function of 
the model we consider, and they claim the existence of several scaling exponents in the 
non-asymptotic regime. 

\medskip\noindent In a more recent paper, the expansion-modification system was studied 
in relation to the universality of the rank-ordering distributions~\cite{alvarez2010order}. 
The authors numerically  found an order-disorder transition which would manifest itself on 
the scaling behavior. According to them, there would be a critical mutation probability 
$p_c\approx 0.4$, such that for $p>p_c$, long-range correlations and consequently 
the scaling behavior of $C_p$ would disappear. As we have shown, this kind of 
order-disorder transition does not occur. The apparent drop of long-range correlations for 
large $p$ can be explained by the lack of statistics. Indeed, a huge amount of data is 
needed to empirically compute correlation functions with a fast power-law decay. In order 
to observe a power-law decay with exponent $-5$ up to two decades, one would need of the 
order of $10^{10}$ sample sequences in $\{0,1\}^{100}$, obtained by the action of the 
expansion-modification dynamics over an arbitrary seed, after a sufficiently 
long transient. This explains why the scaling behavior of the  $C_p$ is very difficult 
to observe from empirical computations for $p\geq 0.4$, where $\beta_p > 5$ 
(see Figure~\ref{fig:comparisonbetas}). 

\medskip \noindent
Some other models of random substitutions have been previously studied. 
In~\cite{1989GodrecheLuck}, Godr\`eche and Luck 
considered a ``perturbed'' Fibonacci substitution for which they show that the Fourier 
spectrum is of mixed type, \emph{i.e}., it contains both singular and continuous parts. 
Zaks obtain the same for a substitution system which can be seen a perturbation of the 
Thue-Morse sequence~\cite{zaks2001}. In both cases the system can be seen as a random 
perturbation of a quasicrystal. These is not the case for the expansion-modification system 
for which we observe a continuous spectrum.

\medskip\noindent
Random substitutions have been treated in some generality by Koslicki in~\cite{2012Koslicki}, 
in the framework of countable Markov chains. We approach the expansion\-mo\-di\-fi\-ca\-tion
system from another point of view, considering the action of the expansion-modification 
dynamics over the space of Borel measures on $\{0,1\}^{{\mathbb N}_0}$. 
Nevertheless, we could in principle use Koslicki's results to, for instance, derive our 
Theorem~\ref{theo:asymptotic}. More interestingly, its result ensuring the almost sure
convergence of frequencies (Theorem 2.4.10 in~\cite{2012Koslicki}) can be used to prove that 
the stationary measure $\mu_p$ is absolutely continuous with respect to a shift-invariant 
ergodic measures. More closely related to ours is the approach by Toom and coworkers 
(see~\cite{2011Toom} and references therein). 
They define a certain class of substitution operators acting on shift-invariant measures, 
nevertheless, the class of substitutions they consider do not include the 
expansion-modification system. None of the above mentioned approaches  
directly apply in our setting, therefore some technical work would be required to adapt 
their results to our case.

\bigskip
\bibliographystyle{plain}

\begin{thebibliography}{99}

\bibitem{alvarez2010order} R. Alvarez-Mart\'\i nez, G. Mart\'\i nez-Mekler and G. Cocho, 
``Order-disorder transition in conflicting dynamics leading to rank-frequency generalized 
beta distributions'' \emph{Physica A} {\bf 390} (1) 120--130 (2011).

\bibitem{buldyrev1995long} A. L. Buldyrev, A. L. Goldberger, S. Havlin, R. N. Mantegna, 
M. E. Matsa, C.-K. Peng, M. Simons, H. E. Stanley,   ``Long-range correlation properties 
of coding and noncoding DNA sequences: GenBank Analysis'', \emph{ Phys. Rev. E } {\bf 51}, 
5084--5091 (1995).

\bibitem{1989GodrecheLuck} C. Godr\`eche and J. M. Luck,  ``Quasiperiodicity and Randomness 
in Tilings of the Plane'', \emph{ Journal of Statistical Physics} {\bf 55} (1/2) 1--28 (1989).

\bibitem{2012Koslicki} D. Koslicki, ``Substitution Markov Chains with Applications to 
Molecular Evolution'', Ph. D. Dissertation, Pennsylvania State University, 2012.

\bibitem{li1989spatial} W. Li, ``Spatial l/f spectra in open dynamical systems'', 
\emph{ Europhysics Letters} {\bf 10} (5) 395--400 (1989).

\bibitem{li1991expansion} W. Li, ``Expansion-modification systems: A model for spatial 
$1/f$ spectra'', \emph{ Physical Review A} {\bf 43} (10) 5240--5260 (1991).  

\bibitem{li1992long} W. Li and K. Kaneko, ``Long-range correlation and partial $1/f$ a 
spectrum in a noncoding DNA sequence'', \emph{ Europhysics Letters } { \bf 17}, 655--660 (1992).

\bibitem{li1997study} W. Li, ``The study of correlation structures of DNA sequences: 
a critical review'', \emph{ Computers Chem. }{\bf 21}, 257--271 (1997).


\bibitem{ma2008infinite} J. Ma, A. Ratan, B. J. Raney, B. B. Suh, M. Miller, D. Haussler, 
``The infinite site model of genome evolution'', \emph{ Proc. Nat. Acad. Sci. } {\bf 105}, 
14254--14261 (2008). 

\bibitem{mansilla2000multiscaling} R. Mansilla and  G. Cocho, ``Multiscaling in  
expansion-modification systems: An explanation for long range correlation in DNA'', 
\emph{ Complex Systems} {\bf 12 }, 207--240 (2000).

\bibitem{mekler2009universality} G. Mart\'{\i}nez-Mekler, R. Alvarez-Mart\'\i nez, 
M. Beltr\'an del R\'{\i}o, R. Mansilla, P. Miramontes, G. Cocho, ``Universality of 
rank-ordering distributions in the arts and sciences'', \emph{ Plos One} {\bf 4}, 
e4791, 1--7 (2008). 

\bibitem{messer&al2005} 
Ph. W. Messer, P. F. Arndt, and M. L\"assig, ``Solvable sequence evolution models 
and genomic correlations'', \emph{ Phys. Rev. Lett.} {\bf 94}, 138103 (2005).

\bibitem{peng1992} C.-K. Peng, S. V. Buldyrev, A. L. Goldberger, S. Havlin, F. Sciortino, 
M. Simons, and H. E. Stanley, ``Long-range correlations in nucleotide sequences'', 
\emph{ Nature} {\bf 356} 168--179 (1992).

\bibitem{robbins1955} H. Robbins, ``A Remark on Stirling's Formula'', 
\emph{ The American Mathematical Monthly} {\bf 62} (1) 26--29 (1955).

\bibitem{2011Toom} A. V. Rocha, A. B. Simas and A. Toom, ``Substitution Operators'',
\emph{Journal of Statistical Physics} {\bf 143}, 585--618.

\bibitem{saakian2008} D. B. Saakian, ``Evolution models with base substitutions, 
insertions, deletions, and selection'', \emph{ Phys. Rev. E} {\bf  78}, 0611920 (2008).

\bibitem{sobottka2011} M. Sobottka and A. G. Hart, ``A model capturing novel strand 
symmetries in bacterial DNA'', \emph{ Biochem. \& Biophys. Research Commun.} {\bf 410}, 
823--828 (2011).

\bibitem{zaks2001} M. Zaks, ``Multifractal Fourier spectra and power-law decay of 
correlations in random substitution sequences'', \emph{Physical Review E} {\bf 65}, 
011111 (2001).

\end{thebibliography}

\end{document}